\newtheorem{theorem}{Theorem}[section]
\newtheorem{lem}[theorem]{Lemma}
\newtheorem{thm}[theorem]{Theorem}
\newtheorem{rem}[theorem]{Remark}
\newtheorem{prop}[theorem]{Proposition}
\newtheorem{cor}[theorem]{Corollary}
\newtheorem{defn}[theorem]{Definition}
\newtheorem{exs}[theorem]{Examples}
\def\z*{\mbox{Z$^*$}}
\def\s*{\mbox{(S$^*$)}}
\def\X{\mathcal{X}}
\def\Y{\mathcal{Y}}
\def\Z{\mathcal{Z}}
\begin{document}

\title[Examples of generalized perfect rings]{A family of examples of \\ generalized perfect rings}

\author{P\i nar Aydo\u gdu}
\address{
Department of Mathematics\\ Hacettepe University\\ 06800 Beytepe, Ankara\\ Turkey}
\email{paydogdu@hacettepe.edu.tr}

\author{Dolors Herbera}
\address{Departament de Matem\`atiques \\
Universitat Aut\`onoma de Barcelona \\ 08193 Bellaterra (Barcelona),
Spain} \email{dolors@mat.uab.cat}

\thanks {The second named author   acknowledges partial support
from   DGI MINECO
MTM2011-28992-C02-01, by ERDF UNAB10-4E-378
\emph{A way to build Europe}, and by the
Comissionat per Universitats i
Recerca de la Generalitat de Catalunya Project
2009 SGR 1389. Part of this paper was written while she was visiting Hacettepe University supported by a grant of the The Scientific and Technological Research Council of Turkey: T\"ubitak; she thanks her host for the kind hospitality. \protect\newline 2010 Mathematics Subject
Classification. Primary: 16D40  Secondary: 16E99.}

\maketitle
\begin{abstract}
 We construct a family of semiprimitive and non von Neumann regular   rings satisfying that any right or left module is isomorphic to a quotient of its    flat cover (in the sense of Enochs) by a small submodule. This answers in the negative a question posed by A.~Amini, B.~Amini, M.~Ershad and H.~Sharif (2007).
 \end{abstract}

\section{Introduction}

Inspired by the fundamental work of Bass \cite{bass} on perfect rings and projective covers, A.~Amini, B.~Amini, M.~Ershad and H.~Sharif proposed in \cite{A} to study a class of rings that they named \emph{generalized perfect rings}.

Let $R$ be a ring, and let $F$ and $M$ be right $R$-modules such that $F_R$ is flat. Following \cite{A}, a module epimorphism  $f\colon F\rightarrow M$   is said to be   a \emph{$G$-flat cover} of $M$  if $\mathrm{Ker}\, (f)$ is a small submodule of $F$. Still following \cite{A}, a ring $R$ is called {\em right generalized perfect} (right $G$-perfect, for short) if every right $R$-module has a flat cover. A ring $R$ is called {\em $G$-perfect}
if it is both left and right $G$-perfect.

It is clear from the definition that right perfect rings are right $G$-perfect rings, and also that von Neumann regular rings are $G$-perfect rings. Moreover, the class of right $G$-perfect rings is closed under finite products and quotients \cite[Proposition~2.6]{A}.

A celebrated result by Bican, El Bashir and Enochs  \cite{BEE} shows  that any module has a  flat cover (see Definition~\ref{approximation}). The relation between $G$-flat covers and flat covers (if any!) is quite unclear. In the case of perfect rings they coincide, and in the case of von Neumann regular rings flat covers are trivially $G$-flat covers but, in general, the converse is not true  \cite{A} (it happens that flat covers are unique up to isomorphism, while $G$-flat covers are not!).

Looking for a characterization of $G$-perfect rings,  it was shown in \cite{A} that if $R$ is right $G$-perfect, then the Jacobson radical $J(R)$ is right $T$-nilpotent and, hence,  idempotents lift modulo   $J(R)$. Moreover, it was  also proved that if $R$ is right duo (i.e. all right ideals are two-sided ideals) and right $G$-perfect, then $R/J(R)$ is von Neumann regular. It was claimed that it was reasonable to conjecture that a right $G$-perfect ring is   von Neumann regular modulo the Jacobson radical. In this work, we answer this conjecture in the negative by constructing semiprimitive $G$-perfect rings that are not von Neumann regular (see Examples~\ref{ex1}).

Our examples are built using the following well-known pattern: let $S\hookrightarrow T$ be a ring inclusion,   and consider the ring
\[R=\{ (x_1,x_2,\ldots, x_n,x,x,\ldots)| n\in \mathbb{N}, x_i\in T, x\in S\}\subseteq T^{\mathbb{N}}.\]
Such a construction appears quite frequently in the literature. Our new input is the study of its   category of modules. To do that it is very useful to consider a family of $\mathrm{TTF}$-triples associated to such type of rings that relates the category of modules over $R$ with the categories of $T$-modules and of $S$-modules, cf. Remark~\ref{propertiesT}.
In Theorem~\ref{main}, we show that if $S$ is right $G$-perfect  and $T$ is von Neumann regular, then $R$ is also right $G$-perfect. We also show that if  flat covers of $S$-modules are $G$-flat covers, then the same is true for $R$. Therefore, as far as we know, it could well be that for a $G$-perfect ring  flat covers are always $G$-flat covers.

We have structured the paper in two sections. In the first one, we give some background material and we prove some preliminary results. In the second section, we specialize to the  particular class of examples we are interested in. For the  reader's sake, we have made the paper as self-contained as possible.

We are very grateful to the referee for his/her comments. Specially, for pointing out that the use of $TTF$-triples would give more  light to our first and naive approach.

\section{Background and preliminary results}

All our rings are associative with $1$, and ring homomorphisms preserve $1$. The term module means unital module.

We recall the notion of cover. The definition is  due to Auslander for the case of artin algebras and  to Enochs for arbitrary rings.

\begin{defn}\label{approximation} {\rm Let $R$ be a ring. Let $\mathcal{C}$ be a class of right $R$-modules, and let $M_R$ be a right $R$-module. A  module homomorphism $f\colon C\to M$ is a \emph{$\mathcal{C}$-precover} of $M$ if it satisfies that
\begin{enumerate}
\item[(i)] $C\in \mathcal{C}$;
\item[(ii)] $\mathrm{Hom}_R(C',f)$ is onto for any $C'\in \mathcal{C}$.
\end{enumerate}
The homomorphism $f$ is a \emph{$\mathcal{C}$-cover} if, in addition, it is right minimal.

Recall that $f\colon C\to M$ is said to be \emph{right minimal} if for any $g\in \mathrm{End}_R(C)$, $f=fg$ implies  $g$   bijective (cf. \cite[\S 1.2]{ARS}).}
\end{defn}

We are interested in the case $\mathcal{C}=\mathcal{F}$ is the class of flat right modules, that is on   flat precovers and on  flat covers. We also consider the class $$\mathcal{E}=\{ B\in \mathrm{Mod}\mbox{-}R\mid \mathrm{Ext}_R^1(L,B)=0\mbox{ for any flat right $R$-module $L$}\}$$ which is called the class of \emph{(Enochs) cotorsion modules}.

Let us recall some well known facts on flat covers that we will use in the sequel. Since any module is a homomorphic image of a projective module it easily follows that  any flat precover is onto. As the class of flat modules is closed under extensions,  Wakamatsu Lemma \cite[Lemma~2.1.13]{GT} implies that the kernel of any flat cover is a cotorsion module. On the other hand, a result of Eklof and Trlifaj \cite[Theorem~10]{ET}   implies that any right $R$-module $M$ fits into an exact sequence
\[0\to B\to L\stackrel{g}\to M\to 0\]
where $L$ is flat and $B$ is a cotorsion module, cf. \cite{BEE}. Since $\mathrm{Ext}_R^1(L',B)=0$ for any flat module $L'$,  it follows that  $g$ is a   flat precover.

If $R$ is a right perfect ring then flat right $R$-modules are projective, and hence  flat covers coincide with projective covers and with $G$-flat covers.   In general, covers are unique up to isomorphism while $G$-flat covers are not \cite[Example~3.1]{A}.

The following definition is quite well known.

\begin{defn} Let $R$ be a ring. A pair $(\X , \Y)$ of subclasses  of $\mathrm{Mod}$-$R$ is said to be a torsion pair if
\begin{enumerate}
\item[(i)] $\mathrm{Hom}_R(X,Y)=\{0\}$ for any $X\in \X$ and $Y\in \Y$.
\item[(ii)] If $X_R$ is a right $R$-module such that $\mathrm{Hom}_R(X,Y)=\{0\}$ for any   $Y\in \Y$ then $X\in \X$.
\item[(iii)] If $Y_R$ is a right $R$-module such that $\mathrm{Hom}_R(X,Y)=\{0\}$ for any   $X\in \X$ then $Y\in \Y$.
\end{enumerate}
In this case, $\X$ is said to be a \emph{torsion class} and $\Y$ is a \emph{torsion-free class}. The objects of $\X$ are called torsion modules and the objects in $\Y$ are called torsion-free modules.
\end{defn}

We refer to \cite{S} for the basics on  torsion pairs and their uses. We quickly recall the facts on torsion pairs that are more relevant to us.

Let  $(\X , \Y)$ be a torsion pair. If $M_R$ is a right $R$-module, it makes sense to consider the largest submodule of $M_R$ that is an object of $\X$, such submodule is called the torsion submodule of $M$ and is denoted by   $t(M)$. It is not difficult to see that $t$ is indeed a functor and a radical. So that, there is an exact sequece
\[0\to t(M)\to M\to M/t(M)\to 0\]
where $M/t(M)\in \Y$.

A class of modules $\X$ is torsion if and only if it is closed under isomorphisms, extensions, coproducts and quotients. Dually, a class of modules $\Y$ is a torsion-free class if it is closed under isomorphism, extensions, submodules and products.

Notice that if a class of modules $\Y$ is closed by products, coproducts, subobjects, quotients and extensions then $\Y$ is a torsion class and a torsion free class at the same time. Therefore, one has a triple $(\X , \Y ,\Z)$ such that $(\X , \Y)$ and $(\Y ,\Z)$ are torsion pairs. Such a triple is called a $\mathrm{TTF}$-triple \cite[Ch. VI \S 8]{S}.

\bigskip

Let
$0\longrightarrow M\overset{h}{\longrightarrow }N\overset{f}{\longrightarrow
}K\longrightarrow 0$
be an exact sequence of right $R$-modules and let $L\overset{g}{\longrightarrow }K\longrightarrow 0$ be an onto homomorphism. We consider the pullback of $f$ and $g$ to obtain a commutative diagram with exact rows and columns:

 \begin{equation}\label{pullback}
\begin{array}{cll}
& 0 & 0 \\
& \downarrow  & \downarrow  \\
& X \phantom{M}= & X=Kerg \\
& \downarrow ^{\varepsilon _{2}} & \downarrow  \\
0\longrightarrow M\overset{\varepsilon _{1}}{\longrightarrow } & L^{\prime }%
\overset{\pi _{2}}{\longrightarrow } & L\longrightarrow 0 \\ \phantom{l} \shortparallel
& \downarrow ^{\pi _{1}} & \downarrow ^{g} \\
0\longrightarrow M\underset{h}{\longrightarrow } & N\underset{f}{%
\longrightarrow } & K\longrightarrow 0 \\
& \downarrow  & \downarrow  \\
& 0 & 0%
\end{array}%
\end{equation}

\bigskip
where $L'=\{(x,y)\in N\oplus L|f(x)=g(y)\}$. The maps $\pi_1\colon L'\rightarrow N$ and $\pi_2\colon L'\rightarrow L$ are restrictions of the canonical projections
$\pi_1\colon N\oplus L\rightarrow N$ and $\pi_2\colon N\oplus L\rightarrow L$, respectively. The homomorphism $\varepsilon_1\colon M\rightarrow L'$ is defined by $\varepsilon_1(x)=(h(x),0)$ for each $x\in M$, and $\varepsilon_2\colon X\rightarrow L'$ is defined by $\varepsilon_2(y)=(0,y)$ for each $y\in X$.

Our construction of $G$-flat covers will be done with such a pullback diagram. For further use we note the following lemma.

\begin{lem}\label{smallness} With the notation above, assume that  $X$ is a small submodule of $L$. If $Y\le L'$ is such that $\varepsilon_1(M)\subseteq Y$, then  $\varepsilon_2(X)+Y=L'$ implies $L'=Y$.
\end{lem}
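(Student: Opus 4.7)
The plan is to push the hypothesis $\varepsilon_2(X)+Y=L'$ down to $L$ via the projection $\pi_2$, invoke the smallness of $X$ there to force $\pi_2(Y)=L$, and then lift back to $L'$ using the fact that $\ker\pi_2\subseteq \varepsilon_1(M)\subseteq Y$.

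First I would apply $\pi_2$ to the equality $\varepsilon_2(X)+Y=L'$. Since the right-hand column of diagram~\eqref{pullback} shows that $\pi_2$ is onto (it is obtained from the onto map $g$ via pullback), and since $\pi_2\circ\varepsilon_2=\mathrm{id}_X$ by the definitions given, this yields $X+\pi_2(Y)=L$. Because $X$ is a small submodule of $L$ by hypothesis, this forces $\pi_2(Y)=L$.

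Next I would argue that $\ker \pi_2=\varepsilon_1(M)$: an element $(x,y)\in L'$ with $\pi_2(x,y)=0$ has $y=0$, hence $f(x)=g(0)=0$, so $x=h(m)$ for some $m\in M$ by exactness of the bottom row, and therefore $(x,0)=\varepsilon_1(m)$. Given an arbitrary $\ell\in L'$, surjectivity of $\pi_2|_Y$ produces some $\ell'\in Y$ with $\pi_2(\ell')=\pi_2(\ell)$; then $\ell-\ell'\in\ker\pi_2=\varepsilon_1(M)\subseteq Y$, whence $\ell=\ell'+(\ell-\ell')\in Y$. This gives $L'=Y$.

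There is really no obstacle here: the only content is the identification of $\ker\pi_2$ with $\varepsilon_1(M)$ (a routine diagram chase in~\eqref{pullback}) and the transfer of smallness through the surjection $\pi_2$. The smallness step is the conceptual core, but it is immediate once the projection is taken; the lifting step is then purely formal.
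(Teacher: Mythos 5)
Your proof is correct and follows essentially the same route as the paper's: push the decomposition down through $\pi_2$, use smallness of $X$ in $L$ to get $\pi_2(Y)=L$, and then lift using the fact that $\ker\pi_2=\varepsilon_1(M)\subseteq Y$. The paper carries out the last step as an explicit element chase (producing $(x',y)\in Y$ with $f(x')=f(x)$ and then $x-x'\in\mathrm{Im}\,h$) rather than first naming $\ker\pi_2$, but that is the same argument phrased slightly differently.
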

\begin{proof} Since $X\ll L$,  $\pi_2(Y)=L$, because $\pi_2(L')=\pi_2(\varepsilon_2(X))+\pi_2(Y)=X+\pi_2(Y)=L$.

Let $(x,y)\in L'$. Since $\pi_2(Y)=L$, there exists $(x',y)\in Y$  such that $g(y)=f(x')=f(x)$. Therefore, $x-x'\in Im(h)$, and so
$(x-x',0)\in \varepsilon_1(M)\subseteq Y$. Now, $(x,y)=(x',y)+(x-x',0)\in Y$. This proves that $Y=L'$.
\end{proof}

The next two lemmas  will be useful to produce $G$-flat covers  and flat covers by using  diagram~(\ref{pullback}).

\begin{lem} \label{smalltorsion} Let $R$ be a ring. Let $(\mathcal{X},\mathcal{Y})$ be a  torsion pair in $\mathrm{Mod}$-$R$ such that the associated torsion radical $t$ is exact. Assume that in diagram~(\ref{pullback}), $M\in \X$ and   $K$, $L\in \Y$.

If $X$ is small in $L$ then $\varepsilon _2 (X)$ is small in $L'$. In particular, if $L_R$ and $M_R$ are flat, then $\pi _1\colon L'\to N$ is a $G$-flat cover of $N$.
\end{lem}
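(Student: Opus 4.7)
My plan is to apply Lemma~\ref{smallness} directly in order to establish $\varepsilon_2(X)\ll L'$. Concretely, I take an arbitrary submodule $Y\le L'$ with $\varepsilon_2(X)+Y=L'$ and show that the hypothesis $\varepsilon_1(M)\subseteq Y$ of Lemma~\ref{smallness} is automatically satisfied; that lemma then forces $Y=L'$, which is exactly the smallness of $\varepsilon_2(X)$ in $L'$.

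To verify $\varepsilon_1(M)\subseteq Y$, I pass to the quotient $L'/Y$. Since $\varepsilon_2(X)+Y=L'$, this quotient is a quotient of $\varepsilon_2(X)\cong X$. As $L\in\Y$ and torsion-free classes are closed under subobjects, $X\in\Y$. The crucial input is now that $t$ is exact: applied to $0\to\ker\to X\to L'/Y\to 0$ together with $t(X)=0$ this yields $t(L'/Y)=0$, so $L'/Y\in\Y$. Since $\varepsilon_1(M)\cong M\in\X$ and $\mathrm{Hom}_R(\X,\Y)=0$, the composite $\varepsilon_1(M)\hookrightarrow L'\to L'/Y$ must vanish, which gives $\varepsilon_1(M)\subseteq Y$ as required.

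For the ``in particular'' clause, I first observe that the class of flat modules is closed under extensions, so the middle row $0\to M\to L'\to L\to 0$ of diagram~(\ref{pullback}) shows that $L'$ is flat once $M$ and $L$ are. A direct inspection of the pullback gives $\ker\pi_1=\{(0,y)\in L':g(y)=0\}=\varepsilon_2(X)$, which the first part of the lemma has just shown to be small in $L'$. Hence $\pi_1\colon L'\to N$ is an epimorphism from a flat module with small kernel, i.e., a $G$-flat cover of $N$.

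I expect the only subtle point to be the use of the exactness of $t$ to go from ``$L'/Y$ is a quotient of the torsion-free module $X$'' to ``$L'/Y$ itself lies in $\Y$'', since torsion-free classes are in general \emph{not} closed under quotients and so this step genuinely needs the hypothesis. Everything else is a formal consequence of the pullback construction, the closure of flats under extensions, and Lemma~\ref{smallness}.
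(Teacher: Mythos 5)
Your proof is correct and takes essentially the same approach as the paper: both reduce the smallness claim to verifying $\varepsilon_1(M)\subseteq Y$ via the exactness of $t$ and then invoke Lemma~\ref{smallness}. Your observation that $L'/Y$ is a torsion-free quotient of $X$ is a mild rephrasing of the paper's step, which applies $t$ to the sequence $0\to\varepsilon_2(X)\cap Y\to\varepsilon_2(X)\oplus Y\to L'\to 0$ to deduce $t(L')=t(Y)$ and identifies $t(L')$ with $\varepsilon_1(M)$; the two arguments use the exactness hypothesis at the same essential point.
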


\begin{proof} Since $L\in \Y$ which is a torsion-free class, also $X\in \Y$. Let $Y\le L'$ is such that $\varepsilon _2(X)+Y=L'$.  Applying $t$ to the exact sequence
\[0\to \varepsilon _2(X)\cap Y \to  \varepsilon _2(X)\oplus  Y\to L'\to 0\]
yields that $t(L')= t(\varepsilon _2(X))\oplus t(Y)=t(Y)$. Since $M\in \X$ and $L\in \Y$ it follows that $t(L')=\varepsilon _1(M)$. Therefore, $\varepsilon _1(M)\subseteq Y$. Now it follows from Lemma~\ref{smallness} that $\varepsilon _2(X)$ is small in $L'$.

The rest of the claim is clear.
\end{proof}

\begin{lem} \label{minimaltorsion} Let $R$ be a ring, let $(\mathcal{X},\mathcal{Y})$ be a  torsion pair in $\mathrm{Mod}$-$R$. Assume that in diagram~(\ref{pullback}), $M\in \X$ and   $K$, $L\in \Y$.  Then $g$ is right minimal if and only if $\pi _1$ is right minimal.
\end{lem}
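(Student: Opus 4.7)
The plan is to translate right minimality through the pullback by first using the torsion pair to identify $\varepsilon_1(M)$ with the torsion submodule $t(L')$, and then setting up a bijection between the two relevant sets of endomorphisms that preserves bijectivity. For the identification, the leftmost column of diagram~(\ref{pullback}) gives a short exact sequence $0 \to M \xrightarrow{\varepsilon_1} L' \xrightarrow{\pi_2} L \to 0$, and since $M \in \X$ and $L \in \Y$, the inclusion $\varepsilon_1(M) \subseteq t(L')$ must be equality, because the quotient $t(L')/\varepsilon_1(M)$ embeds in the torsion-free module $L$.

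Next I would construct a bijection between $A = \{\phi \in \mathrm{End}_R(L) : g\phi = g\}$ and $B = \{\psi \in \mathrm{End}_R(L') : \pi_1\psi = \pi_1\}$. Given $\phi \in A$, the maps $\pi_1 : L' \to N$ and $\phi\pi_2 : L' \to L$ satisfy $f\pi_1 = g\pi_2 = g\phi\pi_2$, so the pullback's universal property delivers a unique $\Phi(\phi) \in B$ with $\pi_2\Phi(\phi) = \phi\pi_2$; concretely, $\Phi(\phi)(x,y) = (x,\phi(y))$. Conversely, any $\psi \in \mathrm{End}_R(L')$ preserves $\varepsilon_1(M) = t(L')$ (since $\X$ is closed under quotients), so it descends via $\pi_2$ to an endomorphism $\bar\psi$ of $L$; for $\psi \in B$ one checks that $g\bar\psi = g$, using that $\pi_2$ is epi together with $f\pi_1\psi = f\pi_1$. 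A routine verification shows $\Phi$ and $\psi \mapsto \bar\psi$ are mutually inverse.

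The crux is then to show that bijectivity is preserved under this correspondence. The key observation is that every $\psi \in B$ acts as the identity on $\varepsilon_1(M)$: writing $\psi(\varepsilon_1(m)) = \varepsilon_1(m')$ and applying $\pi_1$, the equation $\pi_1\psi = \pi_1$ combined with injectivity of $h = \pi_1\varepsilon_1$ forces $m = m'$. With this in hand, a short five-lemma style argument on the SES above shows $\psi$ is bijective if and only if $\bar\psi$ is: injectivity from $\ker\psi \cap \varepsilon_1(M) = 0$, and surjectivity by using $\bar\psi$ surjective to pick $w' \in L'$ with $w - \psi(w') \in \varepsilon_1(M)$, an element then fixed by $\psi$, placing $w$ in the image of $\psi$. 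The main subtle point is precisely this identity-on-$\varepsilon_1(M)$ fact; it is what the torsion-pair hypotheses are doing for us, since without identifying $\varepsilon_1(M)$ with $t(L')$ an arbitrary $\psi \in B$ need not fix $\varepsilon_1(M)$ pointwise and the implication from $\pi_1$ minimal to $g$ minimal would break down.
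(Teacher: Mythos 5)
Your proof is correct and rests on the same structural facts as the paper's: the identification $\varepsilon_1(M)=t(L')$, the observation that an endomorphism $\psi$ of $L'$ satisfying $\pi_1\psi=\pi_1$ preserves (indeed fixes pointwise) $t(L')$ and hence induces an endomorphism of $L'/t(L')\cong L$, and the principle that bijectivity of $\psi$ can be tested on the subobject and the quotient. The organizational difference is real but modest: the paper proves one implication directly (take $h'$ with $\pi_1 h'=\pi_1$, show $t(h')$ is an isomorphism using that $t(\pi_1)$ is one, then show the induced map on the quotient is an isomorphism from $g=gh\varphi$ and right minimality of $g$), and dismisses the converse with ``similar arguments''; you instead build an explicit bijection between $\{\phi:g\phi=g\}$ and $\{\psi:\pi_1\psi=\pi_1\}$ and check that it preserves bijectivity, which delivers both directions in one pass. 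What your version buys is symmetry and an explicit formula $\Phi(\phi)(x,y)=(x,\phi(y))$ coming from the concrete pullback; what the paper's version buys is that it stays entirely at the level of the torsion radical functor and never needs the element-level description of $L'$. One small remark: your ``key observation'' that $\psi\in B$ is the identity on $\varepsilon_1(M)$ is slightly stronger than what the paper records (it only notes $t(h')$ is an isomorphism), but it is exactly right and in fact makes the surjectivity step of your five-lemma argument go through cleanly.
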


\begin{proof} Throughout the proof we denote by $t$ the torsion radical associated to the torsion pair. Since $M\in \X$ and $\L \in \Y$, $t(L')\cong M$ and $L'/t(L')\cong L$. Also, because of diagram~(\ref{pullback}), $t(\pi _1)\colon t(L')\to t(N)$ is an isomorphism. Fix an isomorphism $\varphi \colon L\to L'/t(L')$

Assume $g$ is right minimal, and let $h'\colon L'\to L'$ be such that $\pi _1 h'=\pi _1$. Then $t(\pi _1)t(h')=t(\pi _1)$, and since  $t(\pi _1)$ is an isomorphism we deduce that $t(h')$  is an isomorphism. Now to conclude that $h'$ is an isomorphism we only need to prove that the induced map $h\colon L'/t(L')\to L'/t(L')$ is an isomorphism. Indeed,
\[ g\pi _2=f\pi _1 =g\pi _2 h'.\]
Since $\pi _2 h'=h\varphi \pi _2$ and $\pi _2$ is an onto map, $g=gh\varphi$. Therefore $h\varphi$ and, hence, $h$ are isomorphisms.

The converse follows using similar arguments. \end{proof}

The following lemma is a particular case of \cite[Proposition 4.1.3]{CE}.

\begin{lem} \label{flatoverring} Let $\varphi\colon R\to S$ be a ring morphism such that ${}_RS$ is flat. Let $X_S$ and $Y_S$ be right $S-modules$, then $\mathrm{Ext}_R^i(X,Y)\cong \mathrm{Ext}_S^i(X,Y)$ for any $i\ge 0$.
\end{lem}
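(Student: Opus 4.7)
The plan is to compute both $\mathrm{Ext}$ groups using a single chain complex obtained by base change from an $R$-projective resolution of $X$. First, I would take a projective resolution $Q_\bullet \to X \to 0$ of $X$ as a right $R$-module (viewing $X$ via restriction along $\varphi$). Since ${}_RS$ is flat, applying $-\otimes_R S$ preserves exactness; moreover, this functor is left adjoint to the exact restriction functor $\mathrm{Mod}\text{-}S \to \mathrm{Mod}\text{-}R$, so it sends projectives to projectives. Thus $Q_\bullet \otimes_R S \to X \otimes_R S \to 0$ is a resolution of $X\otimes_R S$ by projective right $S$-modules.

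Next, I would identify $X \otimes_R S$ with $X$ via the counit map $x \otimes s \mapsto xs$. Under the hypothesis that makes the lemma a ``particular case'' of the Cartan--Eilenberg result (in the applications, $\varphi$ will be a quotient map, hence a ring epimorphism), this identification is an isomorphism, so that $Q_\bullet \otimes_R S \to X \to 0$ becomes an $S$-projective resolution of $X$. The tensor-hom adjunction, together with the identity $\mathrm{Hom}_S(S, Y) = Y$, then yields
\[
\mathrm{Hom}_S(Q_i \otimes_R S, Y) \cong \mathrm{Hom}_R(Q_i, \mathrm{Hom}_S(S, Y)) = \mathrm{Hom}_R(Q_i, Y),
\]
so the cochain complexes computing $\mathrm{Ext}_S^i(X, Y)$ and $\mathrm{Ext}_R^i(X, Y)$ coincide term by term and with the same differentials. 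Taking $i$-th cohomology on both sides gives the required isomorphism for every $i \ge 0$.

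The main obstacle is precisely the identification $X \otimes_R S \cong X$: flatness of ${}_RS$ alone does not guarantee it, so this step must be supplied by the ambient setup of the Cartan--Eilenberg reference (typically $\varphi$ being a ring epimorphism, equivalently, the restriction functor being fully faithful on $\mathrm{Mod}\text{-}S$). Once this identification is in hand, the rest is a formal bookkeeping exercise with the tensor-hom adjunction and the preservation of projectives under a left adjoint to an exact functor.
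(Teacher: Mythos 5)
The paper states this lemma as a bare citation of Cartan--Eilenberg, Proposition~4.1.3, and gives no proof, so there is no argument of the authors' to compare against. Your base-change argument is correct \emph{provided} one also assumes what you flag at the end: that $\varphi$ is a ring epimorphism, so that the counit $X\otimes_R S\to X$, $x\otimes s\mapsto xs$, is an isomorphism for every right $S$-module $X$. You are right that flatness of ${}_RS$ alone cannot give the stated conclusion: take $R=k$ a field, $S=k[x]$, $X=Y=k=S/(x)$; then ${}_RS$ is free, hence flat, yet $\mathrm{Ext}^1_S(k,k)\cong k$ while $\mathrm{Ext}^1_R(k,k)=0$. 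In the only place the lemma is actually invoked (inside Corollary~\ref{flatcotorsion}), $\varphi$ is the surjective quotient $R\to R/I$, hence a ring epimorphism, so the missing hypothesis is present in the applications. With it in place your steps assemble correctly: $-\otimes_R S$ is exact by flatness of ${}_RS$ and, as a left adjoint to the exact restriction functor, sends projectives to projectives, so $Q_\bullet\otimes_R S\to X$ is an $S$-projective resolution; and the tensor-Hom adjunction $\mathrm{Hom}_S(Q_i\otimes_R S,Y)\cong\mathrm{Hom}_R(Q_i,Y)$ is natural in $Q_i$, identifying the two cochain complexes together with their differentials. Taking cohomology gives $\mathrm{Ext}^i_R(X,Y)\cong\mathrm{Ext}^i_S(X,Y)$ for all $i\ge 0$.
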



The following proposition collects some well known facts on $\mathrm{TTF}$-triples that will be useful in the sequel.

\begin{prop}\label{recognizing}  Let $R$ and $S$ be rings such that there is an exact sequence
\[0\to I\to R\stackrel{\varphi}\to S\to 0\]
where $\varphi$ is a ring morphism such that ${}_RS$ becomes a flat module. Consider the following classes of modules
\[\mathcal{X}=\{X\in \mathrm{Mod}\mbox{-}R\mid XI=X\} \]
\[\mathcal{Y}=\{Y\in \mathrm{Mod}\mbox{-}R\mid YI=\{0\}\} \]
\[\mathcal{Z}=\{Z\in \mathrm{Mod}\mbox{-}R\mid \mathrm{ann}_Z(I)=\{0\} \} \]
then $(\mathcal{X},\mathcal{Y},\mathcal{Z})$ is a $\mathrm{TTF}$-triple such that the torsion pair $(\mathcal{X}, \mathcal{Y})$ is hereditary and   $\mathrm{Ext}_R^i(X,Y)=0$ for any $i\ge 0$, $X\in \mathcal{X}$ and $Y\in \mathcal{Y}$.

Moreover, the torsion radical associated to the torsion class $\mathcal{X}$ is naturally equivalent to the exact functor $-\otimes _R I$, and the torsion radical associated to the class $\mathcal{Y}$ is naturally equivalent to the functor $\mathrm{Hom}_R(S,-)$.
\end{prop}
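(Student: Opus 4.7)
The plan is to leverage the flatness of ${}_RS$ to first extract two technical facts about the ideal $I$, and then use them to verify the TTF-triple structure and the Ext-vanishing.

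First, I would apply $- \otimes_R I$ and $I \otimes_R -$ to the short exact sequence $0 \to I \to R \to S \to 0$. Using flatness of ${}_RS$, the long exact $\mathrm{Tor}$ sequence yields $\mathrm{Tor}_1^R(M,I) \cong \mathrm{Tor}_2^R(M,S) = 0$ for every right $R$-module $M$, so ${}_RI$ is also flat. Moreover, $\mathrm{Tor}_1^R(M,S)=0$ gives a natural identification $M \otimes_R I \cong MI$ (as submodules of $M$), and in particular $I \otimes_R I \cong I^2$ injects into $I$ with cokernel $S \otimes_R I = SI = 0$, yielding $I^2 = I$. These two facts (flatness and idempotency of $I$) are what drive everything else.

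Next, I would verify that $\mathcal{Y}$ is simultaneously a torsion class and a torsion-free class. Closure under submodules, quotients, products and coproducts is immediate from $YI=0$. Closure under extensions is the only nontrivial point: given $0 \to Y_1 \to Y \to Y_2 \to 0$ with $Y_i \in \mathcal{Y}$, the flatness of ${}_RI$ makes $-\otimes_R I$ exact, so the identification $M\otimes_R I = MI$ forces $YI = 0$. Hence $\mathcal{Y}$ is bicentral and there is a TTF-triple $(\mathcal{X},\mathcal{Y},\mathcal{Z})$. The torsion radical for $(\mathcal{X},\mathcal{Y})$ is $M \mapsto MI$: idempotency of $I$ gives $(MI)I = MI$, so $MI \in \mathcal{X}$, and the quotient $M/MI$ lies in $\mathcal{Y}$. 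Under the identification from Step 1, this radical is naturally $-\otimes_R I$, which is exact, so $(\mathcal{X},\mathcal{Y})$ is hereditary. For the second pair, $\mathrm{Hom}_R(S,M) = \mathrm{ann}_M(I)$ is the largest submodule in $\mathcal{Y}$, and using $I^2=I$ one sees $M/\mathrm{ann}_M(I) \in \mathcal{Z}$, so the torsion radical for $\mathcal{Y}$ in $(\mathcal{Y},\mathcal{Z})$ is naturally $\mathrm{Hom}_R(S,-)$.

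Finally, for the Ext-vanishing, let $X \in \mathcal{X}$ and $Y \in \mathcal{Y}$, and fix a projective resolution $P_\bullet \to X$ of right $R$-modules. Since $X \otimes_R S = X/XI = 0$ and ${}_RS$ is flat, the complex $P_\bullet \otimes_R S$ is an exact complex of projective right $S$-modules, hence split exact. Because $Y$ is an $S$-module, the adjunction $\mathrm{Hom}_R(P_i,Y) \cong \mathrm{Hom}_S(P_i \otimes_R S, Y)$ identifies $\mathrm{Hom}_R(P_\bullet, Y)$ with $\mathrm{Hom}_S(P_\bullet \otimes_R S, Y)$, which remains split exact; taking cohomology gives $\mathrm{Ext}_R^i(X,Y)=0$ for every $i \ge 0$.

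The main technical obstacle is really concentrated in the very first step: once one has ${}_RI$ flat and $I$ idempotent, the rest is bookkeeping with the natural iso $M\otimes_R I \cong MI$. In particular, extension closure of $\mathcal{Y}$ and hereditariness both reduce to flatness of $I$, while the higher Ext-vanishing reduces to the splitting of the exact complex $P_\bullet \otimes_R S$ of projective $S$-modules.
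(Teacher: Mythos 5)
Your proposal is correct, and it diverges from the paper's proof in one genuinely interesting place: the Ext-vanishing. The paper establishes that the torsion pair $(\mathcal{X},\mathcal{Y})$ is hereditary so that $\mathcal{Y}$ is closed under injective hulls (citing \cite[Proposition~VI.3.1]{S}), takes an injective $R$-coresolution of $Y$ consisting of modules in $\mathcal{Y}$, and observes that $\mathrm{Hom}_R(X,-)$ annihilates it. You instead work with a projective resolution $P_\bullet\to X$ on the other side: since $X\otimes_RS=0$ and ${}_RS$ is flat, $P_\bullet\otimes_RS$ is a bounded-below exact complex of projective $S$-modules, hence contractible, and the adjunction $\mathrm{Hom}_R(P_\bullet,Y)\cong\mathrm{Hom}_S(P_\bullet\otimes_RS,Y)$ transports the splitting. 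This is a perfectly valid alternative that stays entirely on the projective side, avoids invoking closure of $\mathcal{Y}$ under injective hulls, and is arguably more elementary. You are also more explicit than the paper in one earlier step: the paper asserts bluntly that ``$\mathcal{Y}$ is equivalent to $\mathrm{Mod}$-$S$, so it has all the closure properties,'' but closure of $\mathcal{Y}$ under \emph{extensions in $\mathrm{Mod}$-$R$} is not automatic from the categorical equivalence --- it requires $I^2=I$ (or, as you put it, exactness of $-\otimes_RI$), and you supply that justification explicitly. For completeness, note that the paper reaches flatness and idempotency of $I$ by remarking that ${}_RI$ is a pure submodule of the flat module $R$, and pure ideals are idempotent; your long-exact-Tor argument delivers the same two facts.

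Two minor stylistic points, neither an error: (1) your extension-closure argument for $\mathcal{Y}$ uses flatness of ${}_RI$, but idempotency alone suffices there (write $x=\sum x_j'x_j''$ with $x_j',x_j''\in I$); the place where flatness is actually indispensable is the exactness of the radical, i.e.\ the hereditariness of $(\mathcal{X},\mathcal{Y})$. (2) In identifying the cokernel of $I\otimes_RI\hookrightarrow I$ with $S\otimes_RI$, you are implicitly tensoring the sequence of right $R$-modules $0\to I\to R\to S\to 0$ with ${}_RI$ and using $\mathrm{Tor}_1^R(S,I)=0$, which is flatness of ${}_RI$; it is worth making that dependency order explicit (first deduce ${}_RI$ flat from $\mathrm{Tor}_2^R(-,S)=0$, then deduce $I^2=I$).
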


\begin{proof} Notice that the class $\Y$ is equivalent to the category of right $S$-modules, so that $\Y$ has all the closure properties that ensure that it is a torsion class and a torsion-free class. Then it is easy to check that the classes $\X$ and $\Z$ are the ones that complete the $\mathrm{TTF}$-triple.

Since ${}_RS$ is flat, ${}_RI$ is a pure submodule of $R$ and, hence,  it is also flat and an idempotent ideal. Therefore, if $t$ denotes the radical associated to the torsion class $\X$, it follows that $t(M)=MI\cong M\otimes _RI$ for any right $R$-module $M$. From this it easily follows that $t$ is naturally equivalent to the exact funcor $-\otimes _RI$.

Since $t$ is exact, $\X$ is closed under submodule and then $\Y$ is closed by injective hulls \cite[Proposition~VI.3.1]{S}. Therefore, if $Y\in \Y$, then its injective hull as an $R$-module must be an $S$-module, so its injective hull as an $R$-module must coincide with its injective hull as an $S$-module. This implies that, for $Y\in \Y$, we can consider an injective $R$-coresolution $I ^\bullet$ of $Y$ consisting of injective modules in $\Y$. Now, if $X\in \X$, then the cocomplex $\mathrm{Hom}_R(X,I^\bullet)$ is zero, so that it has zero cohomology. This shows that $\mathrm{Ext}_R^i(X,Y)=0$ for any $i\ge 0$.
\end{proof}

\begin{cor} \label{flatcotorsion} Let $R$ and $S$ be rings such that there is an exact sequence
\[0\to I\to R\stackrel{\varphi}\to S\to 0\]
where $\varphi$ is a ring morphism such that $S$ becomes a flat $R$-module on the right and on the left. Then:
\begin{itemize}
\item[(i)] $M_R$ is flat if and only if $M\otimes _RS$ is a flat right $S$-module and $MI$ is a flat right $R$-module.
\item[(ii)] Let $M$ be a right $S$-module, then $M$ is cotorsion as a right $R$-module if and only if it is cotorsion as an $S$-module. \end{itemize}
\end{cor}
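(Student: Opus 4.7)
The plan is to apply the functor $M\otimes _R-$ to the short exact sequence $0\to I\to R\to S\to 0$. Since ${}_RS$ is flat, $\mathrm{Tor}_1^R(M,S)=0$ for every right $R$-module $M$, so we obtain a natural short exact sequence
\[0\to MI\to M\to M\otimes _RS\to 0,\]
using the identification $M\otimes _RI\cong MI$ given by Proposition~\ref{recognizing}. This sequence will be the backbone of both parts. Two complementary observations will be used throughout: (a) if $M_R$ is flat, the base-change isomorphism $(M\otimes _RS)\otimes _SN\cong M\otimes _RN$ shows that $M\otimes _RS$ is flat as a right $S$-module; and (b) because $S_R$ is also flat, transitivity of flatness ensures that every flat right $S$-module is flat as a right $R$-module.

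For part (i), the forward direction combines (a) and (b): if $M_R$ is flat, then $M\otimes _RS$ is flat over $S$, hence flat over $R$, so in the displayed sequence both end terms are flat. Then $MI$ is the kernel of a surjection between flat modules and is therefore flat (as one reads off from the $\mathrm{Tor}$ long exact sequence). The converse is symmetric: if $MI$ is flat over $R$ and $M\otimes _RS$ is flat over $S$ (hence over $R$ by (b)), then $M$ is an extension of two flat $R$-modules and hence flat.

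For part (ii), the implication ``$R$-cotorsion $\Rightarrow$ $S$-cotorsion'' is immediate from Lemma~\ref{flatoverring} together with (b): any flat right $S$-module $F$ is flat over $R$, and $\mathrm{Ext}_S^1(F,M)\cong \mathrm{Ext}_R^1(F,M)$, which vanishes by hypothesis. For the converse, given a flat right $R$-module $L$, I would apply $\mathrm{Hom}_R(-,M)$ to $0\to LI\to L\to L\otimes _RS\to 0$. Proposition~\ref{recognizing} gives $\mathrm{Ext}_R^i(LI,M)=0$ for all $i\ge 0$ since $LI\in \X$ and $M\in \Y$, so the long exact sequence collapses to $\mathrm{Ext}_R^1(L,M)\cong \mathrm{Ext}_R^1(L\otimes _RS,M)$. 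Lemma~\ref{flatoverring} converts the right-hand side into $\mathrm{Ext}_S^1(L\otimes _RS,M)$, and part (i) makes $L\otimes _RS$ flat over $S$, so the $S$-cotorsion hypothesis on $M$ forces the vanishing. No step is a genuine obstacle: one only needs to keep careful track of which side of the flatness of $S$ is invoked at each appeal, and the $\X$-part of every flat module is killed cleanly by the $\mathrm{Ext}_R^i(\X,\Y)=0$ vanishing provided by Proposition~\ref{recognizing}.
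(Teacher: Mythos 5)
Your proof is correct and takes essentially the same route as the paper: both build on the short exact sequence $0\to MI\to M\to M\otimes_R S\to 0$, the vanishing $\mathrm{Ext}^i_R(\mathcal{X},\mathcal{Y})=0$ from Proposition~\ref{recognizing}, and Lemma~\ref{flatoverring}. The only deviation is cosmetic: you certify flatness of $MI$ as the kernel of a surjection between flat modules, whereas the paper checks directly that $MI\otimes_R-$ is an exact functor.
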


\begin{proof}  Notice that since $S$ is flat on both sides as $R$-module then also $I$ is a flat $R$-module on both sides.

$(i)$. By Proposition~\ref{recognizing} and following the notation in that proposition,  for any right $R$-module $M$ there is an exact sequence
\[0\to MI\to M\to M/MI\to 0\qquad (*)\]
with $MI \in \mathcal{X}$ and $M/MI \in \mathcal{Y}$.

Assume $M_R$ is flat. The functors $MI\otimes _R-$ and $M/MI\otimes _R-$ are naturally equivalent to $M\otimes _RI\otimes _R-$ and $M\otimes _RS\otimes _R-$, respectively. These functors are exact because $I_R$ and $S_R$ are flat. Therefore, $MI$ and $M/MI$ are flat right $R$-modules. In particular, $M/MI\cong M\otimes _RS$ is flat as a right $S$-module.

Assume now that $MI$ is a flat right $R$-module  and that $M/MI$ is flat as a right $S$-module. The functor  $M/MI\otimes _R- $ is exact because it is naturally equivalent to $M/MI\otimes _SS\otimes _ R-$ and $S_R$ is flat. Therefore, $M/MI$ is flat as a right $R$-module.  Using the exact sequence $(*)$ we deduce that $M_R$ is flat.

$(ii)$. By $(i)$ and Proposition~\ref{recognizing}, a right $R$-module $M$ is cotorsion if and only if $\mathrm{Ext}_R^i(F,M)=0$ for any flat module $F$ that is either in $\mathcal{X}$ or in $\mathcal{Y}$. If, in addition, $M$ is a right $S$ module, then it is in $\mathcal{Y}$ so we only  need to check that for any flat $R$-module $F$ in $\mathcal{Y}$, that is, for any flat right $S$-module, $\mathrm{Ext}_R^1(F,M)= \mathrm{Ext}_S^1(F,M)=0$ (cf. Lemma~\ref{flatoverring}). This yields that a right $S$-module is cotorsion as an $R$-module if and only if it is cotorsion as an $S$-module.
\end{proof}

\section{The examples}

Now we start to study the particular construction we are interested in.

\begin{prop} \label{TTFproperties} Let $S\subseteq T$ be an extension of rings. Let
$$R=\{ (x_1,x_2,\ldots, x_n,x,x,\ldots)| n\in \mathbb{N}, x_i\in T, x\in S\}.$$ Then, the following statements hold.
\begin{itemize}
\item[(i)] The map $\varphi \colon R \to S$ defined by $\varphi(x_1,x_2,\ldots, x_n,x,x,\ldots)=x$ is a ring homomorphism with kernel $$I=  \bigoplus_{\mathbb{N}} T=\bigoplus _{i\in \mathbb{N}} e_i R,$$ where  $e_i=(0,\ldots,0,1^{(i)},0,0,\ldots)$ for any $i\in \mathbb{N}$.
\item[(ii)]  $I$ is a two-sided, countably generated idempotent   ideal of R which is pure and projective on both sides. Therefore, $S$ is flat as a right and as a left $R$-module.
\item[(iii)] For any $i\in \mathbb{N}$, the canonical projection into the $i$-th component $\pi _i \colon R\to T$ has kernel $(1-e_i) R$ so that $T$ is projective as a right and as a left $R$-module via the $R$-module structure induced by $\pi _i$.
\end{itemize}
\end{prop}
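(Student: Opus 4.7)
The plan is to verify the three items in sequence, with essentially all of the substance concentrated in the flatness assertion of part (ii); everything else is bookkeeping with the orthogonal idempotents $e_i$ and the componentwise ring operations.

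For (i), I first check that $R$ is a subring of $T^{\mathbb{N}}$: the set of eventually-constant sequences with tail in $S$ is closed under sum and product because $S$ is a subring of $T$ and the operations are componentwise. The map $\varphi$ reads off the stable tail value, so it respects sum and product automatically, and $\ker\varphi$ is exactly the set of finitely supported sequences, i.e.\ $\bigoplus_{\mathbb{N}}T$. From $e_ie_j=\delta_{ij}e_i$ and the componentwise multiplication, every element of $I$ has a unique expression $\sum_{i=1}^n e_i r$ for some $r\in R$ and some $n$, giving $I=\bigoplus_i e_iR$ internally.

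For (ii), $I$ is two-sided as the kernel of a ring homomorphism, countably generated by $\{e_i\}_{i\in\mathbb{N}}$, and idempotent because each $e_i=e_i^2$ lies in $I^2$. Projectivity of $I$ on either side follows immediately: both $e_iR$ and $Re_i$ are direct summands of $R$, and a coproduct of direct summands of $R$ is projective. The main point is to show that $I$ is pure in $R$ on both sides, equivalently that $S\cong R/I$ is flat as an $R$-module on both sides. I would prove this by realizing $S$ as a direct limit of projective $R$-modules: set $f_n=e_1+\cdots+e_n$; by orthogonality of the $e_i$, $f_n$ is an idempotent, so $R=f_nR\oplus(1-f_n)R$ as right $R$-modules (and symmetrically on the left), and $R/f_nR\cong(1-f_n)R$ is projective. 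Since $I=\bigcup_n f_nR$ is a countable increasing union and cokernels commute with direct limits, $S=R/I\cong\varinjlim R/f_nR$ is a direct limit of projectives, hence flat on the right; the left case follows by the symmetric argument with $Rf_n$ in place of $f_nR$.

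For (iii), a direct computation shows $\ker\pi_i=(1-e_i)R$: an element $r\in R$ has vanishing $i$-th coordinate iff $(1-e_i)r=r$, iff $r\in(1-e_i)R$. Consequently, $T$ endowed with the $R$-action induced by $\pi_i$ is isomorphic to $R/(1-e_i)R\cong e_iR$, a direct summand of $R$, hence projective; the left-module case is symmetric. The only non-formal ingredient in the whole argument is the flatness of $S$ in (ii), and the key observation that makes it go through is the idempotent filtration $\{f_n\}_{n\in\mathbb{N}}$, which trivializes the quotients $R/f_nR$.
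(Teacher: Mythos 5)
Your proof is correct and takes essentially the same approach as the paper: both rest on the orthogonal (central) idempotents $e_i$, with your filtration $f_n=e_1+\cdots+e_n$ making explicit why $I$ is a pure, projective, idempotent ideal and $S\cong\varinjlim R/f_nR$ is flat as a direct limit of the projectives $(1-f_n)R$. The paper's one-line proof simply states these consequences directly; you derive purity of $I$ from flatness of $S$ rather than the other way around, an equivalent and standard reversal grounded in the same filtration.
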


\begin{proof} Statement $(i)$ is clear. To prove $(ii)$ note that since $\{e_i\}_{i\in  \mathbb{N}}$ is a family of orthogonal central idempotents of $R$, $I^2=I$,  $I$ is projective on both sides and, moreover, $I$ is a  pure ideal of $R$ on both sides. Now the existence of the  pure exact sequence $$0\longrightarrow I \longrightarrow R \overset{\varphi}{\longrightarrow}S\longrightarrow 0$$
yields that $S$ is flat as a right and as a left $R$-module.

Statement $(iii)$ is clear.
\end{proof}

\begin{rem} \label{propertiesT} Let $R$ be a ring as in Proposition~\ref{TTFproperties}.  In view of Proposition~\ref{recognizing}, there is a $\mathrm{TTF}$-triple  $(\mathcal{X},\mathcal{Y},\mathcal{Z})$ associated to the pure exact sequence
\[0\to I\to R\stackrel{\varphi}{\to}S\to 0\]
where
$\mathcal{X}=\{X\in \mathrm{Mod}-R\mid X=\oplus _{i\in \mathbb{N}}Xe_i\}$,
$\mathcal{Y}=\{Y\in \mathrm{Mod}-R\mid YI=\{ 0\} \} $
$\mathcal{Z}=\{Z\in \mathrm{Mod}-R\mid \mathrm{ann}_Z(I)=\{0\} \} $.
Also, for any $i\in \mathbb{N}$, the split sequence
\[0\to R(1-e_i)\to R\stackrel{\pi _i}{\to}T\to 0\]
yields a corresponding (split) $\mathrm{TTF}$-triple  $(\mathcal{X}_i,\mathcal{Y}_i,\mathcal{Z}_i)$.
 \end{rem}

\begin{prop} \label{ringproperties} Let $R$ be a ring as in the statement of Proposition~\ref{TTFproperties}. Then,
\begin{itemize}
\item[(i)] $J(R)$ contains $J=\bigoplus_{\mathbb{N}} J(T)$. Moreover, $J$ is essential on both sides into $J(R)$. In particular, $J(R)=0$   if and only if $J(T)=0$.
\item[(ii)]  $R$ is von Neumann regular if and only if $S$ and $T$ are von Neumann regular.
\end{itemize}
\end{prop}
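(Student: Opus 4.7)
The plan is to exploit the fact that each $e_i$ is a central idempotent of $R$, which yields a ring decomposition $R=e_iR\times (1-e_i)R$. Via $\pi_i$, the direct summand $e_iR$ is isomorphic to $T$ as a ring, so
\[J(R)\cap e_iR=J(e_iR)\cong e_iJ(T).\]
For part (i), the inclusion $J\subseteq J(R)$ follows because each element of $J=\bigoplus_{i\in\mathbb{N}} e_iJ(T)$ is a \emph{finite} sum of elements of $e_iJ(T)\subseteq J(R)$.

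For the essentiality of $J$ in $J(R)$, I will take an arbitrary nonzero $x=(x_1,\ldots,x_n,x_\infty,x_\infty,\ldots)\in J(R)$ and multiply by a suitable $e_k$ on the right (and then on the left for the bilateral statement). The key observation is that $xe_k\in J(R)\cap e_kR\cong e_kJ(T)$ for every $k$; in particular this forces each coordinate of $x$ to lie in $J(T)$, and in particular $x_\infty\in J(T)\cap S$. I then split into two cases: if $x_\infty\neq 0$, any $k>n$ gives $0\neq xe_k=e_kx_\infty\in J$; if $x_\infty=0$, then $x\in I$ has finite support and I pick $j$ with $x_j\neq 0$, so $0\neq xe_j=e_jx_j\in J$. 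The same argument works symmetrically for left essentiality. The final equivalence $J(R)=0\Leftrightarrow J(T)=0$ is immediate from essentiality together with the fact that a nonzero element of $J(T)$ produces $e_1y\in J\setminus\{0\}$.

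For part (ii), the forward direction uses that the class of von Neumann regular rings is closed under quotients: by Proposition~\ref{TTFproperties}(i), $S\cong R/I$ is a quotient of $R$, and by Proposition~\ref{TTFproperties}(iii), $T\cong R/(1-e_i)R$ is a quotient of $R$. For the converse, given $r=(x_1,\ldots,x_n,x,x,\ldots)\in R$, I will use regularity of $T$ to pick $y_i\in T$ with $x_iy_ix_i=x_i$ for $i=1,\ldots,n$, and regularity of $S$ to pick $y\in S$ with $xyx=x$. Setting $s=(y_1,\ldots,y_n,y,y,\ldots)\in R$ yields $rsr=r$ componentwise.

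None of the steps presents a serious obstacle; the only subtlety is keeping track that the ring-theoretic computations in $J(R)$ really do descend to the componentwise $J(T)$-condition, which the central idempotent decomposition handles automatically.
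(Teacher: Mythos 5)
Your proof is correct and follows essentially the same route as the paper: both deduce $J\subseteq J(R)$ by working coordinatewise with the central idempotents $e_i$, and both establish essentiality by observing that for any nonzero $a\in J(R)$ there is some $k$ with $ae_k\neq 0$ and $ae_k\in J$. The only cosmetic difference is that you phrase the key step as $J(R)\cap e_kR = J(e_kR)\cong J(T)$ via the ring decomposition $R\cong e_kR\times(1-e_k)R$, whereas the paper verifies directly that $1-ae_k t$ invertible in $R$ is equivalent to $1-yt$ invertible in $T$; these amount to the same thing.
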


\begin{proof}
$(i).$ Let $x\in J(T)$. Then, for any $i\in \mathbb{N}$, $x(i)=(0,\ldots,0,x^{(i)},0,0,\ldots)\in J$ is an element of $J(R)$. Since $J(R)$ is an ideal, it follows that it contains $J$.  Now we show that $J_R\le _e J(R)_R$, the statement on the left follows by a symmetric argument.

Let $0\neq a= (x_1,x_2,\ldots, x_n,x,x,\ldots)\in J(R)$. There exists $i\in \mathbb{N}$ such that $0\neq ae_i=(0,\ldots,0,y^{(i)},0,0,\ldots)\in J(R)$ for some $y\in T$. Then, for any $t\in T$, $(1,\dots ,1, \dots )-ae_i(0,\ldots,0,t^{(i)},0,0,\ldots)$ is an invertible element of $R$ and this is equivalent to say that $1-yt$ is an invertible element of $T$. Hence $y\in J(T)$, so that $ae_i$ is a non-zero element of $J\cap aR$.

$(ii).$ Assume that $R$ is von Neumann regular. Since $S$ is a homomorphic image of $R$, it is also von Neumann regular.

The canonical projection $\pi _1\colon R\to T$ defined by $\pi_1(x_1,x_2,\ldots, x_n,x,x,\ldots)=x_1$ is also an onto ring homomorphism, hence $T$ is von Neumann regular.

It is clear that if $S$ and $T$ are von Neumann regular, then so is $R$.\end{proof}

\begin{lem} \label{flat} Let $R$ be a ring as in the statement of Proposition~\ref{TTFproperties}. Let $M_R$ be a right $R$-module. Then $M_R$ is flat if and only if $M\otimes _RS$ is a flat right $S$-module and, for any $i\in \mathbb{N}$, $Me_i$ is a flat right $T$-module.
\end{lem}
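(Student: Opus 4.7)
\emph{Proof plan.} By Proposition~\ref{TTFproperties}(ii), both ${}_RS$ and $S_R$ are flat, so the hypotheses of Corollary~\ref{flatcotorsion} are satisfied. Part~(i) of that corollary says that $M_R$ is flat if and only if $M\otimes_R S$ is flat as a right $S$-module and $MI$ is flat as a right $R$-module. It therefore suffices to show that $MI$ is flat over $R$ if and only if each $Me_i$ is flat over $T$.

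The family $\{e_i\}_{i\in\mathbb{N}}$ consists of orthogonal central idempotents with $I=\bigoplus_{i\in\mathbb{N}}e_iR$ by Proposition~\ref{TTFproperties}(i). Centrality of $e_i$ makes each $Me_i$ a right $R$-submodule of $M$, and a direct computation using $e_ie_j=\delta_{ij}e_i$ yields $MI=\bigoplus_{i\in\mathbb{N}}Me_i$ as right $R$-modules. Since an arbitrary direct sum is flat precisely when each of its summands is flat, this reduces matters to showing, for each fixed $i$, that $Me_i$ is flat as a right $R$-module if and only if it is flat as a right $T$-module, where the $T$-structure comes from $\pi_i\colon R\to T$.

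This last equivalence is the only step requiring genuine argument, and it falls out of a ring-product decomposition. Since $e_i$ is a central idempotent, we have an internal ring direct product $R=e_iR\times(1-e_i)R$, and $\pi_i$ restricts to a ring isomorphism $e_iR\cong T$ carrying $e_i$ to $1_T$. Under this product decomposition a right $R$-module $N$ splits uniquely as a pair $(Ne_i,\,N(1-e_i))$ of a right $e_iR$-module and a right $(1-e_i)R$-module, and flatness of $N$ over $R$ is equivalent to flatness of each component over the corresponding factor (since $\mathrm{Tor}^R$ decomposes accordingly). Applied to $N=Me_i$, whose $(1-e_i)R$-component is zero, this gives that $Me_i$ is $R$-flat if and only if it is $e_iR$-flat, if and only if it is $T$-flat. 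Combining the three equivalences proves the lemma; the only subtle point is verifying that flatness is detected factorwise on a product of rings, which is routine but is where the centrality of $e_i$ is genuinely used.
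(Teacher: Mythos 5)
Your proof is correct, and through the first reduction it follows the paper exactly: applying Corollary~\ref{flatcotorsion} to $\varphi\colon R\to S$ to get that $M_R$ is flat iff $M\otimes_R S$ is $S$-flat and $MI$ is $R$-flat, then decomposing $MI=\bigoplus_i Me_i$ (using orthogonality of the $e_i$) to reduce to each summand. Where you diverge is the last step. The paper applies Corollary~\ref{flatcotorsion} a second time, now to the flat ring epimorphism $\pi_i\colon R\to T$ (equivalently, to the split $\mathrm{TTF}$-triple $(\mathcal{X}_i,\mathcal{Y}_i,\mathcal{Z}_i)$ from Remark~\ref{propertiesT}): with $N=Me_i$ one has $N\cdot(1-e_i)R=0$ and $N\otimes_R T\cong N$, which immediately gives that $Me_i$ is $R$-flat iff it is $T$-flat. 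You instead decompose $R$ as the ring direct product $e_iR\times(1-e_i)R$ and invoke the fact that flatness is detected factorwise on a product, noting that the $(1-e_i)R$-component of $Me_i$ vanishes. The two arguments are essentially the same in substance, since $\pi_i$ is a split epimorphism and the $\mathrm{TTF}$-triple it induces is split; the paper's uniform use of Corollary~\ref{flatcotorsion} streamlines the exposition, while your version is more elementary and makes explicit that the split case is genuinely simpler than the argument needed for $\varphi$.
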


\begin{proof} Applying  Corollary~\ref{flatcotorsion} with the flat epimorphism $\varphi$, it follows that $M_R$ is flat if and only if $M\otimes _RS$ is a flat right $S$-module and $ Me_i$ is a flat right $R$-module for any $i\in \mathbb{N}$.

 Applying  Corollary~\ref{flatcotorsion} with the flat epimorphism $\pi _i$, it follows that $Me_i$ is a flat right $R$-module if and only if it is a flat $T$-module. \end{proof}

Now we are ready to prove our main result:

\begin{thm} \label{main} Let $S\subseteq T$ be an extension of rings. Assume $T$ is von Neumann regular and that $S$ is right $G$-perfect. Then
$$R=\{ (x_1,x_2,\ldots, x_n,x,x,\ldots)| n\in \mathbb{N}, x_i\in T, x\in S\}$$ is a right $G$-perfect ring such that $J(R)=0$.

Moreover, if S is a ring such that flat covers are $G$-flat covers, then also  $R$ satisfies this property.
\end{thm}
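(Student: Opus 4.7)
The plan divides into three pieces. First, since $T$ is von Neumann regular we have $J(T)=0$, and Proposition~\ref{ringproperties}(i) immediately yields $J(R)=0$. For the $G$-perfectness claim, I would start from an arbitrary right $R$-module $N$ and consider the canonical torsion sequence
\[0 \to NI \to N \to N/NI \to 0\]
attached to the $\mathrm{TTF}$-triple $(\X,\Y,\Z)$ of Remark~\ref{propertiesT}, so that $NI\in\X$ and $N/NI\in\Y$. Since $N/NI$ is annihilated by $I$ it is naturally an $S$-module, and the assumption that $S$ is right $G$-perfect supplies a $G$-flat cover $g\colon L \to N/NI$ over $S$. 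Forming the pullback diagram~(\ref{pullback}) with $M=NI$ and $K=N/NI$ then produces a surjection $\pi_1\colon L'\to N$ that I expect to be a $G$-flat cover of $N$.

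To verify this I would use the flatness criterion of Lemma~\ref{flat}. The module $L$ is $R$-flat because $L\otimes_R S\cong L$ is $S$-flat and $Le_i=0$ for every $i$; and $NI$ is $R$-flat because $(NI)\otimes_R S = (NI)/(NI)I = 0$ (using $I^2=I$), while $(NI)e_i=Ne_i$ is a $T$-module, hence flat since $T$ is von Neumann regular. Thus $L'$, sitting in an extension of $L$ by $NI$, is itself flat. The torsion radical for $(\X,\Y)$ is naturally equivalent to $-\otimes_R I$ by Proposition~\ref{recognizing} and is therefore exact, so Lemma~\ref{smalltorsion} applies and $\ker\pi_1\cong \varepsilon_2(\ker g)$ is small in $L'$. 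This gives the $G$-flat cover and proves the main assertion.

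For the final statement, assume every flat cover of an $S$-module is a $G$-flat cover, and rerun the construction with $g$ chosen to be \emph{the} flat cover of $N/NI$ as an $S$-module; by hypothesis it is then also a $G$-flat cover of $N/NI$. The previous paragraph already gives that $\pi_1\colon L'\to N$ is a $G$-flat cover, and I would further claim that $\pi_1$ is a flat cover of $N$, so by uniqueness every flat cover of $N$ has small kernel. Right minimality of $\pi_1$ transfers from right minimality of $g$ via Lemma~\ref{minimaltorsion}. For the flat precover property, note that $\ker g$ is cotorsion as an $S$-module (Wakamatsu's Lemma), and therefore cotorsion as an $R$-module by Corollary~\ref{flatcotorsion}(ii); since $\ker\pi_1\cong\ker g$, the $\mathrm{Ext}$-vanishing characterisation makes $\pi_1$ a flat precover, which combined with right minimality yields the flat cover.

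The main subtlety, rather than a real obstacle, is the identification of ``small'' and ``cotorsion'' over $R$ versus over $S$ for the $S$-module $L$: the $R$-submodule lattice of $L$ coincides with its $S$-submodule lattice because $LI=0$, and cotorsion transfers by Corollary~\ref{flatcotorsion}(ii). Once these bookkeeping identifications are in place, the proof is a clean assembly of Lemmas~\ref{smalltorsion}, \ref{minimaltorsion}, and~\ref{flat} applied to the pullback diagram~(\ref{pullback}).
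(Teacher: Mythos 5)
Your proposal is correct and follows essentially the same route as the paper: the torsion sequence $0\to NI\to N\to N/NI\to 0$ from the $\mathrm{TTF}$-triple, a pullback against a $G$-flat cover of the $S$-module $N/NI$, and an appeal to Lemmas~\ref{smalltorsion}, \ref{minimaltorsion}, \ref{flat} and Corollary~\ref{flatcotorsion} in exactly the same roles. Your only deviation is cosmetic --- you make explicit (via Lemma~\ref{flat}) the flatness of $L$ and $NI$ as $R$-modules and the coincidence of small/cotorsion over $R$ and over $S$ for modules killed by $I$, points the paper leaves implicit.
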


\begin{proof}   By Proposition~\ref{ringproperties}, it readily follows that $J(R)=0$.

Let $N$ be any right $R$-module. By Proposition~\ref{TTFproperties}, there is a pure exact sequence
$$0\longrightarrow NI\cong \bigoplus _{i\in \mathbb{N}} Ne_i\longrightarrow  N\stackrel{f}{\longrightarrow} N/NI\longrightarrow 0.$$

 Since $T$ is von Neumann regular, for any $i\in \mathbb{N}$, $Ne_i$ is a flat $T$-module. Hence $NI$ is flat as a right $R$-module by Lemma~\ref{flat}.

 Let $0\to X\to L\stackrel{g}\to N/NI\to 0$ be a $G$-flat cover of the right $S$-module $N/NI$.
Considering the pullback of $h$ and $f $ yields the following diagram with exact rows and columns

\bigskip

$$%
\begin{array}{clcll}
& 0 &  & 0 &  \\
& \downarrow  &  & \downarrow  &  \\
& X & = & X=Kerh &  \\
& \downarrow  &  & \downarrow  &  \\
0\longrightarrow NI\longrightarrow  & L^{\prime } & \overset{%
\pi _{2}}{\longrightarrow } & L\text{\quad }\longrightarrow 0 &  \\
\shortparallel  & \downarrow ^{\pi _{1}} &  & \downarrow ^{g} &  \\
0\longrightarrow NI\longrightarrow  & N & \overset{f}{\longrightarrow } & N/NI\longrightarrow 0 &  \\
& \downarrow  &  & \downarrow  &  \\
& 0 &  & 0 &
\end{array}%
$$
Since, by Proposition~\ref{TTFproperties}, the radical associated to the torsion pair $(\X, \Y)$ is exact and $L\in \Y$, it follows from Lemma~\ref{smalltorsion} that  $\pi _1$ is a $G$-flat cover of $N$.

Now assume,  in addition, that  $0\to X\to L\stackrel{h}\to N/NI\to 0$ is a  flat cover of  the right $S$-module $N\otimes _RS$. In particular, $X_S$ is cotorsion. By Corollary~\ref{flatcotorsion}, $X_R$ is also a cotorsion module, hence $0\to X\to L'\stackrel{\pi _1}\to N\to 0$ is a   flat precover of $N$. By Lemma~\ref{minimaltorsion} it follows that $\pi _1$   is also a  flat cover.
\end{proof}

Finally, we show how to produce the examples claimed in the Introduction from Theorem~\ref{main}.

\begin{exs}\label{ex1}

\emph{\textbf{(a)}} Let $F$ be a field, and let  $S$ be any finite dimensional $F$-algebra such that $J(S)\neq 0$.  Since $S$ is artinian, it is $G$-perfect. If $\mathrm{dim}_F(S)=n$, then $S\hookrightarrow T=\mathbb{M}_n(F)$ which is von Neumann regular.
Therefore, $$R=\{ (x_1,x_2,\ldots, x_n,x,x,\ldots)| n\in \mathbb{N}, x_i\in T, x\in S\}$$ is $G$-perfect by Theorem~\ref{main}. By Proposition~\ref{ringproperties}, $J(R)=0$ and $R$ is not von Neumann regular.

For a particular realization of such a ring $R$ consider, for example, $S=\left(
\begin{array}{cc}
F & F \\
0 & F%
\end{array}%
\right) $. In this case, $T$ can be taken   to be $M_2(F)$.

\medskip

\emph{\textbf{(b)}}  Let $R$ be as in (a). Then, $R\subseteq \prod \mathbb{M}_n(F)=T'$ which is a von Neumann regular ring and
$R'=\{ (x_1,x_2,\ldots, x_n,x,x,\ldots)| n\in \mathbb{N}, x_i\in T', x\in R\}$ is also a $G$-perfect ring.

\end{exs}

Note that, by Theorem~\ref{main}, in the above  examples    flat covers are $G$-flat covers.

\end{document}